\newtheorem{theorem}{Theorem}[section]
\newtheorem{lemma}[theorem]{Lemma}
\theoremstyle{definition}
\newtheorem{definition}[theorem]{Definition}
\newtheorem{proposition}[theorem]{Proposition}
\newtheorem{corollary}[theorem]{Corollary}
\newtheorem{notation}[theorem]{Notation}
\newtheorem{remark}[theorem]{Remark}
\newtheorem{Notation and Remark}[theorem]{Notation and Remark}
\long\def\comment#1{\relax}
\long\def\commented_out#1{\relax}
\DeclareMathOperator{\alfa}{\alpha}
\newcommand{\N}[2][k]{N_{#2}(<#1)}
\newcommand{\Nd}[2][k]{N'_{#2}(<#1)}
\newcommand{\Ralfak}[1][R]{{#1}[\alfa_1,\ldots,\alfa_k]}
\newcommand{\Null}[1][R]{N_{#1}}
\newcommand{\Nulld}[1][R]{N'_{#1}}
\DeclareMathOperator{\im}{Im}
\newcommand{\Stabk}[1][R]{St_{\alfa_1,\ldots,\alfa_k}(#1)}
\newcommand{\PrPol}[1][R]{\mathcal{P}(#1)}
\newcommand{\PolFun}[1][R]{\mathcal{F}(#1)}
\DeclareMathOperator{\quv}{\xspace{ }\triangleq\xspace{ }}
\DeclareMathOperator{\nquv}{\xspace{}\not\triangleq\xspace{}}
\begin{document}
	\openup .3em
	\title[]{Polynomial functions over  dual numbers of several variables}
	
	
	\author{Amr Ali Abdulkader Al-Maktry}
	\address{\hspace{-12pt}Department of Analysis and Number Theory (5010) \\
		Technische Universit\"at Graz \\
		Kopernikusgasse 24/II \\
		8010 Graz, Austria}
	\curraddr{}
	\email{almaktry@math.tugraz.at}
	\curraddr{}
	\email{}
	

	\subjclass[2010]{Primary 13B25;
		Secondary  12E10, 06B10, 05A05,  20B35}
	
	\keywords{ Finite commutative rings, dual  numbers,
		polynomials, polynomial functions,  
		polynomial permutations, permutation polynomials, null polynomials, finite polynomial permutation groups}
	
	\date{}
	
	\dedicatory{}
	\maketitle
	
	\section*{Abstract}
	Let $k$ be a positive integer. For a commutative ring $R$,  the ring of dual numbers of  $k$ variables over $R$
	is the quotient ring $R[x_1,\ldots,x_k]/ I $, where $I$ is the ideal generated by the set $\{x_ix_j\mid  i,j\in \{1, \ldots,k\}\}$. This ring can be viewed as $R[\alpha_1,\ldots,\alpha_k]$ with $\alfa_i \alfa_j=0$, where $\alfa_i=x_i+I$ for $1\le i,j\le k$. We investigate the polynomial functions of  $R[\alpha_1,\ldots,\alpha_k]$ whenever $R$ is a finite commutative ring. We derive counting formulas for the number of polynomial functions and polynomial permutations on $R[\alpha_1,\ldots,\alpha_k]$ depending on the order of the pointwise stabilizer of the  subring of constants $R$ in the group of polynomial permutations of $R[\alpha_1,\ldots,\alpha_k]$. Further, we show that the stabilizer group of $R$ is independent of the number of variables $k$. Moreover, we prove that a function $F$ on $R[\alpha_1,\ldots,\alpha_k]$ is a polynomial function if and only if
	a system of linear equations  on $R$ that depends on $F$ has a solution.\\

	\section{Introduction}\label{CHSIntro} 
	
	Let $R$ be a finite commutative ring with unity. Then a function $F\colon R\longrightarrow R$ is said to be a polynomial function on  $R$ if there exists a polynomial $f\in R[x]$ such that
	$f(a)=F(a)$ for every $a\in R$. In this case, we say that $F$ is the induced function of $f$ on $R$ and $f$ represents (induces) $F$. Moreover, if $F$ is a bijection,  we say that $F$ is a \emph{polynomial permutation} and $f$ is a \emph{permutation polynomial}. 
	If $R$ is a finite field, it can be shown easily by using  Lagrange  interpolation that every function on $R$ is a polynomial function. The situation is different when $R$ is not a field and it is somewhat more complicated to study the properties of polynomial functions on such a ring. We denote by $\PolFun[R]$   the set of polynomial functions on $R$,  which is evidently a monoid under the composition of functions. Moreover, its subset of polynomial permutations forms a group  and we denote it by $\PrPol[R]$.
	
	Kempner~\cite{Residue} was the first mathematician who studied polynomial functions on a finite ring which is not a field.
	He studied extensively the polynomial functions on $\mathbb{Z}_m$, the ring of integers modulo $m$. However, his arguments and results  were somewhat lengthy and sophisticated. Therefore, for a long time some researchers \cite{pol1,pol2,pol3} followed up his work, obtained simpler proofs and contributed to the subject as well.
	Meanwhile, some others were interested in the group of  polynomial permutations modulo $p^n$ \cite{per1,per2}. Other mathematicians have generalized the concepts of polynomial functions on $\mathbb{Z}_m$ into other rings, for example, local principal ideal  rings~\cite{Necha} and Galois rings~\cite{gal}. Later, Frisch~\cite{suit} characterized the polynomial functions over a more general class of local rings.  Surprisingly, all rings examined in ~\cite{gal,Necha,Residue} are contained in this class.
	
	In a recent paper \cite{Haki}, the authors considered  the polynomial functions of the ring 
	$R[x]/(x^2)$, the ring of dual numbers over $R$. In particular, they examined extensively the properties of the polynomial functions
	on dual numbers over the integers modulo $p^n$  by relating them to the polynomial functions modulo $p^n$. However, dual numbers over finite local rings that are not fields  are not contained in the class of rings covered in \cite{suit}, (see Proposition ~\ref{CHSsutt}).

	It should be mentioned that   some mathematicians examined the properties of polynomial functions on weaker structures such as semi groups~\cite{semi} and monoids~\cite{mon}. 
	
	The importance of studying polynomial functions emanates from their intrinsic applications in other areas. For example, permutation polynomials modulo $p^n$ have been employed widely in computer science
	(see for example \cite{Com1,Com2}). Also, they occur  as isomorphisms of combinatorial objects with vertex set  $\mathbb{Z}_{p^n}$ \cite{Cobj1,Cobj2}. For this reason, we think
	that investigating the polynomial functions on new structures will give a good chance for new applications
	to come out. 
	
	In this paper, we are interested in the polynomial functions of the ring of dual numbers of several variables  over  
	a finite local ring $R$, that is, the ring  $R[x_1,\ldots,x_k]/ I $, where $I$ is the ideal generated by the set $\{x_ix_j\mid  i,j\in\{1,\ldots,k\}\}$, alternatively, the ring $\Ralfak$ with  $\alfa_i \alfa_j=0$.  We relate the properties of the polynomial functions on such a ring to the polynomial functions on  $R$ (see for example Theorems~\ref{CHS4} and \ref{CHSGenper}). Furthermore, we show that the pointwise stablizer of $R$ in the group of polynomial permutations on $\Ralfak$ plays an essential role in the counting formulas of the polynomial functions and the polynomial permutations on $\Ralfak$. More generally, we show that the properties of the polynomial functions on $R[x]/(x^2)$ discussed in \cite{Haki}  can be carried over to those on the ring $\Ralfak$.
	
	Here is a summary of the paper. Section~\ref{CHSsc2} contains  some basics and notations. In Section~\ref{CHSsc3}, we 
	characterize  null polynomials on $\Ralfak$.
	Section~\ref{Sec04} considers  permutation  polynomials and polynomial permutations on $\Ralfak$.
	Then, in Section~\ref{CHSsc4}, we  consider a group of polynomial permutations on $\Ralfak$  that stabilizes (fixes) the elements of $R$ pointwise, and derive some counting formulas in terms of the order of this stabilizer group. Finally, we obtain necessary and sufficient conditions  for polynomial functions on $\Ralfak$ in section~\ref{CHSsec5}

	\section{Basics}\label{CHSsc2}
	In this section, we introduce some definitions and facts that  appear in the paper frequently. Throughout this paper,  let $k$ be a positive integer, and for $f\in R[x]$ let $f'$ denote its  first formal derivative.
	\begin{definition}\label{CHSequvfun}
		Let $A$ be a commutative ring, and  $f\in A[x]$. Then:
		\begin{enumerate}
			\item The polynomial $f$ gives rise to a polynomial function on $A$ by substitution for the variable. We use the notation  $[f]_A$ for this function. We just write $[f]$ instead of $[f]_A$, when there is no confusion.
			\item If $[f]$ is a permutation of $A$, then we call  $[f]$ is a polynomial permutation and $f$ a permutation polynomial on $A$.
			\item If $g\in A[x]$ and $[f]=[g]$, this means that $f$ and $g$ induce the same function  on $A$ and we abbreviate this with $f \quv  g$ on $A$.
			\item We define $$\PolFun[A]=\{[f] \mid f\in A[x]\}, \text{ and } $$
			$$\PrPol[A]=\{[f] \mid [f] \text{ is a permutation of }A \text{ and }   f\in A[x]\}. $$
			
			\item If $R$ is a subring of $A$, and $f\in R[x]$, then $f$ gives rise to polynomial functions on $R$ and as well as on $A$. To distinguish between them we write $[f]_R$ and $[f]_A$.
			
		\end{enumerate}
	\end{definition}
	\begin{remark}
		Clearly, $\quv$ on $A$ is an equivalence relation on $A[x]$. Also, there is a   bijective correspondence between the equivalence classes of $\quv$ and the polynomial
		functions on $A$. In particular, if $A$ is   finite, then the number $|\PolFun[A]|$
		of different polynomial functions on $A$  equals the number of 
		equivalence classes of $\quv$ on $A[x]$.
		
	\end{remark}
	
	\begin{definition}\label{CHS001}
		For a commutative ring $R$, the ring of dual numbers of $k$ variables over $R$ is the quotient
		ring $R[x_1,\ldots,x_k]/I$, where $I$
		is the ideal generated by the set $\{x_ix_j\mid  i,j\in\{1,\ldots,k\}\}$. We write 
		$\Ralfak$ for  $R[x_1,\ldots,x_k]/I$, where $\alpha_i$ represents $x_i+I$.
	\end{definition}
	\begin{remark}
		Note that every  element of  $R[x_1,\ldots,x_k]/I$ has a unique representation as an $R$-linear combination of $1,\alfa_1,\ldots,\alfa_k$. That is, $\Ralfak$ is a free $R$-algebra with basis $\{1,\alfa_1,\ldots,\alfa_k\}$. We call the coefficient of $1$ the ``constant coefficient''. Also, $R$ is canonically embedded as a subring in 
		$\Ralfak$ by $r\rightarrow r \cdot 1$, and we have
		\[\Ralfak=\{r_0+\sum\limits_{i=1}^{k}r_i\alfa_i\mid  r_0,r_i\in R, \text{ with }\alfa_i \alfa_j=0 \text { for } 1\le i,j\le k \}.\] 
		It follows from this that  every polynomial $f\in \Ralfak$ has a unique representation  $f =f_0 +\sum\limits_{i=1}^{k}f_i \alfa_i$, where $f_0,f_1,\ldots,f_k\in R[x]$.
	\end{remark}
	
	The following proposition   summarizes some properties  of $\Ralfak$ whose proof is immediate  from Definition~\ref{CHS001}.
	
	\begin{proposition}\label{CHS0}
		Let $R$ be a commutative ring. Then the following hold. 
		
		\begin{enumerate}
			\item
			For $a_0,\ldots,a_k,b_0,\ldots,b_k\in R$, we have:
			
			\begin{enumerate}
				\item 
				$(a_0+\sum\limits_{i=1}^{k}a_i\alfa_i)(b_0+\sum\limits_{i=1}^{k}b_i\alfa_i)=a_0b_0+\sum\limits_{i=1}^{k}(a_0b_i+b_0a_i)\alfa_i$;
				\item
				$a_0+\sum\limits_{i=1}^{k}a_i\alfa_i$ is a unit in $\Ralfak$ if and only if 
				$a_0$ is a unit in $R$. In this case,\\ 
				$(a_0+\sum\limits_{i=1}^{k}a_i\alfa_i)^{-1}=a_0^{-1}-\sum\limits_{i=1}^{k}a_0^{-2}a_i\alfa_i$.
			\end{enumerate}
			\item
			$\Ralfak$ is a local ring if and only if $R$ is a local ring.
			\item 
			If $R$ is a local ring with a maximal ideal $ \mathfrak{m}$ 
			of nilpotency $n$, then $\Ralfak$ is a local ring whose maximal 
			ideal $ \mathfrak{m}+\sum\limits_{i=1}^{k}\alfa_i R$ has nilpotency $n+1$.
		\end{enumerate}
	\end{proposition}
	We  use the following lemma frequently.
	\begin{lemma}\label{CHS02} \label{CHS3} \label{CHS21}
		Let $R$ be a commutative ring and 
		$a_0,\ldots,a_k\in R$.
		\begin{enumerate}
			\item
			If $f\in R[x]$,
			then
			\[
			f(a_0+\sum\limits_{i=1}^{k}a_i\alfa_i)=f(a_0)+\sum\limits_{i=1}^{k} a_if'(a_0)\alfa_i.
			\]
			\item
			If $f\in {\Ralfak}[x]$ and     $f_0,\ldots, f_k$ are the unique polynomials in $R[x]$
			such that $f =f_0 +\sum\limits_{i=1}^{k}f_i \alfa_i$, then 
			\[			f(a_0+\sum\limits_{i=1}^{k}a_i\alfa_i)=f_0(a_0)+ \sum\limits_{i=1}^{k}(a_if_0'(a_0)+f_i(a_0))\alfa_i.
			\]
		\end{enumerate}
	\end{lemma}
	\begin{proof}
		(1) Follows from Taylor expansion and the fact that $\alpha_i\alpha_j=0$ for $1\le i,j\le k$.\\
		(2) Follows from (1). 
	\end{proof}
	
	The above lemma yields  necessary conditions for a function 
	$F\colon \Ralfak\longrightarrow \Ralfak$ to be a polynomial function.
	\begin{corollary}\label{CHSneccondit}
		Let $F\colon \Ralfak\longrightarrow \Ralfak$ be a polynomial function and  
		let $a_0,a_{1},\ldots,a_k\in R$.
		Then:
		\begin{enumerate}
			\item The constant coefficient of  $F(a_0+\sum\limits_{i=1}^{k}a_i \alfa_i)$ depends only on $a_0$;
			\item The  coefficient of $\alpha_i$ in $F(a_0+\sum\limits_{i=1}^{k}a_i \alfa_i)$ depends only on $a_0$ and $a_i$.
		\end{enumerate}
		
	\end{corollary}
	\begin{definition}\cite{suit}.\label{CHS1}
		Let $R$ be a finite commutative local ring with a maximal ideal  $\mathfrak{m}$ and $ {L}\in\mathbb{N}$ minimal with $\mathfrak{m}^{L}=(0)$. We call $R$ \textit{suitable}, if for all $a, b\in R$ and all
		$l\in\mathbb{N}$, $ab\in\mathfrak{m}^l\Rightarrow a\in \mathfrak{m}^i$ and $b\in \mathfrak{m}^j$ with $i+j\geq$ min$(L,l)$.
	\end{definition}
	
	The following proposition shows that $\Ralfak$ is not in the class of rings covered in~\cite{suit} unless $R$ is a finite field. 
	
	\begin{proposition}\label{CHSsutt} 
		Let $R$ be a finite local ring. Then $\Ralfak$ is suitable if and only if $R$ is a finite field.
	\end{proposition}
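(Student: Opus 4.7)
The plan is to use Proposition~\ref{0}(3), which tells us that if $R$ is local with maximal ideal $\mathfrak{m}$ of nilpotency $n$, then $\Ralfak$ is local with maximal ideal $\mathfrak{M}=\mathfrak{m}+\sum_{i=1}^{k}\alfa_iR$ of nilpotency $L'=n+1$. Crucially, because $\alfa_i\alfa_j=0$, one has
\[
\mathfrak{M}^2=\mathfrak{m}^2+\sum_{i=1}^{k}\alfa_i\mathfrak{m},
\]
so $\alfa_1\in\mathfrak{M}\setminus\mathfrak{M}^2$ (the coefficient of $\alfa_1$ in any expression from $\mathfrak{M}^2$ lies in $\mathfrak{m}$, whereas $\alfa_1$ has coefficient $1\notin\mathfrak{m}$). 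This observation will drive the ``only if'' direction.

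For the direction \emph{$R$ is a field $\Rightarrow \Ralfak$ is suitable}: then $\mathfrak{m}=(0)$, $n=1$, and $\mathfrak{M}^2=(0)$, so $L'=2$. I would check the suitability condition for $a,b\in\Ralfak$ with $ab\in\mathfrak{M}^l$ by cases on $l$. For $l=0$ take $i=j=0$; for $l=1$ note that $\Ralfak$ is local, so $ab\in\mathfrak{M}$ forces $a$ or $b$ to be in $\mathfrak{M}$, giving $i+j\ge 1$. For $l\ge 2$ we have $ab=0$; writing $a=a_0+\sum a_i\alfa_i$ and $b=b_0+\sum b_i\alfa_i$, Proposition~\ref{0}(1)(a) yields $a_0b_0=0$ and $a_0b_i+a_ib_0=0$. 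Since $R$ is a field, WLOG $a_0=0$, so $a\in\mathfrak{M}$; then $a_ib_0=0$ for every $i$, and a subcase split on whether $b_0=0$ (giving $i=j=1$ and $i+j=2$) or $b_0\ne 0$ (forcing $a=0\in\mathfrak{M}^2$, so $i=2$, $j=0$) produces $i+j\ge 2=\min(L',l)$.

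For the direction \emph{$\Ralfak$ is suitable $\Rightarrow R$ is a field}: I argue by contrapositive. If $R$ is not a field, then $n\ge 2$, so $L'=n+1\ge 3$. Take $a=b=\alfa_1$ and $l=L'=n+1$. Then $ab=\alfa_1^2=0\in\mathfrak{M}^l$. However, by the computation of $\mathfrak{M}^2$ above, $\alfa_1\notin\mathfrak{M}^2$, so any $i,j$ with $a\in\mathfrak{M}^i$ and $b\in\mathfrak{M}^j$ satisfy $i\le 1$ and $j\le 1$, hence $i+j\le 2<n+1=\min(L',l)$. This violates suitability.

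The main obstacle, and really the only nontrivial ingredient, is producing the element that certifies non-suitability in the ``only if'' direction: one needs an element of $\mathfrak{M}\setminus\mathfrak{M}^2$ whose square is zero. The element $\alfa_1$ works precisely because of the relation $\alfa_i\alfa_j=0$, which both forces $\alfa_1^2=0$ and—via the explicit description of $\mathfrak{M}^2$—prevents $\alfa_1$ from sitting in $\mathfrak{M}^2$. Everything else is a careful bookkeeping of nilpotency indices using Proposition~\ref{0}.
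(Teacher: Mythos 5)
Your proposal is correct and follows essentially the same route as the paper: both use Proposition~\ref{0}(3) to get nilpotency $n+1$ for the maximal ideal of $\Ralfak$, dispose of the field case via $\mathfrak{M}^2=(0)$, and certify non-suitability when $n\ge 2$ by observing that $\alfa_1\in\mathfrak{M}\setminus\mathfrak{M}^2$ yet $\alfa_1^2=0\in\mathfrak{M}^{n+1}$. You merely spell out two steps the paper leaves implicit (the explicit description of $\mathfrak{M}^2$ justifying $\alfa_1\notin\mathfrak{M}^2$, and the case check of suitability when $R$ is a field), which is fine.
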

	\begin{proof} Since $R$ is a local ring with a maximal ideal $\mathfrak{m}$ and nilpotency $n$,  $\Ralfak$ is a local ring with maximal ideal $\mathfrak{m}_1=\mathfrak{m}+\sum\limits_{i=1}^{k}\alfa_i R$ and  nilpotency $L=n+1$ by Proposition \ref{CHS0}. Now if $R$ is a field, the result follows easily since $\mathfrak{m}_1^2=(0)$. If $R$ is not
		a field, we notice that $L=n+1>2$ and  $\alfa_1 \in \mathfrak{m}_1\setminus \mathfrak{m}_1^2$, but $\alfa_1^2=0\in\mathfrak{m}_1^{n+1}$. Hence $\Ralfak$ is not suitable, when $R$ is not a field. 
	\end{proof}
	
	\section{Polynomial functions  on $\Ralfak$}\label{CHSsc3}
	From now on, let $R$ be a finite commutative ring with unity.
	In this section, we determine when a given polynomial is  a null polynomial on $\Ralfak$, and whether two polynomials induce the same function on $\Ralfak$. Then we apply these results to obtain a counting formula  for the number of polynomial functions on $\Ralfak$, depending on the indices of the ideals $\Null,\Nulld$   in $R[x]$ (defined below).
	\begin{definition} \label{CHSnulldef}
		\begin{enumerate}
			\item A polynomial $f \in R[x]$ is called a null polynomial on $R$ if $f$ induces the zero function.  	
			\item 	We define $\Null, \Null'$ as:
			\begin{enumerate}
				\item  $\Null=\{ f\in R[x]\mid  f \quv 0 \text{ on } R\}$;
				\item   
				$\Null'=\{ f\in R[x]\mid  f \quv 0\text{ and } 
				f' \quv 0 \text{ on } R\}$. 
			\end{enumerate}
		\end{enumerate}
	\end{definition}
	\begin{remark}\label{nulremark}
		It is evident that $\Null$ and $\Nulld$ are ideals of $R[x]$ with $\Nulld\subseteq \Null$.  Also,  $f \equiv g  \mod \Null$  if and only if $[f]=[g]$; that is polynomial functions on $R$ are in bijective correspondence with residue classes $\mod \Null$. In particular, 
		$|\PolFun|=\left[R[x]\colon \Null\right]$.
	\end{remark}
	\begin{lemma}\label{CHS31} 
		Let $f\in R[x]$. Then:
		\begin{enumerate}
			\item
			$f$ is a null polynomial on $\Ralfak$ if and only if 
			$f\in\Nulld$;
			\item
			$f\alfa_i$ is a null polynomial on $\Ralfak$ for every $1\le i\le k$ if and only if 
			$f\in\Null$.
		\end{enumerate}
	\end{lemma}
	
	\begin{proof} (1)   By Lemma~\ref{CHS02}, for every $a_0,\ldots,a_k \in R$,
		$f(a_0+\sum\limits_{i=1}^{k}a_i \alfa_i)=f(a_0)+ \sum\limits_{i=1}^{k}a_if'(a_0)\alfa_i$. 
		Thus  the fact that $f$ is a null polynomial on $\Ralfak$ is equivalent to 
		$$f(a_0+\sum\limits_{i=1}^{k}a_i \alfa_i)=f(a_0)+ \sum\limits_{i=1}^{k}a_if'(a_0)\alfa_i= 0 \text{ for all }a_0,\ldots,a_k\in R.$$ 
		But
		this is  equivalent to $f(a_0)=0$ and $a_if'(a_0)= 0$ for all $a_0,a_i\in R$ and $i=1,\ldots,k$, 
		which implies that  $f(a_0)=0$ and $f'(a_0)= 0$  for all $a_0\in R$. 
		Hence $f$ and $f'$ are null polynomials on  $R$, which means that $f\in\Nulld$.\\
		(2) Follows immediately from Lemma~\ref{CHS02}.  
	\end{proof}
	\begin{theorem} \label{CHS4} 
		Let $\Null$ and $\Nulld$ as in Definition~\ref{CHSnulldef}, and	let		
		$f =f_0 +\sum\limits_{i=1}^{k}f_i \alfa_i$, where 
		$f_0,\ldots, f_k \in R[x]$. 
		Then $f$ is a null polynomial on $\Ralfak$ if and only if  
		$f_0\in\Nulld$ and $f_i\in\Null$ for $i=1,\ldots,k$.
	\end{theorem}
	
	\begin{proof}
		By Lemma~\ref{CHS3}, $f(a_0+\sum\limits_{i=1}^{k}a_i\alfa_i)=f_0(a_0)+ \sum\limits_{i=1}^{k}(a_if_0'(a_0)+f_i(a_0))\alfa_i$ 
		for all $a_0,\ldots,a_k\in R$. This immediately implies the ``if'' direction.
		To see the ``only if'', suppose that $f$ is a null polynomial on $\Ralfak$. 
		Then
		\begin{equation*}
		f_0(a_0)+ \sum\limits_{i=1}^{k}(a_if_0'(a_0)+f_i(a_0))\alfa_i=0 \text{ for all }  a_0,\ldots,a_k\in R.
		\end{equation*} 
		Clearly,  $f_0$ is a null polynomial on $R$. 
		Substituting first $0$, then $1$, for $a_i$, $i=1,\ldots,k$,  we find that $f_i$ and $f_0'$ 
		are null polynomials on $R$. Therefore $f_0\in \Nulld$ and $f_i\in\Null$ for $i=1,\ldots,k$. 
	\end{proof}
	Combining Lemma~\ref{CHS31} with Theorem~\ref{CHS4} gives the following criterion.
	\begin{corollary}\label{CHSNulleqc}
		Let $f =f_0 +\sum\limits_{i=1}^{k}f_i \alfa_i$, where $f_0,\dots, f_k \in R[x]$. 
		Then $f$ is a null polynomial on $\Ralfak$ if and only if 
		$f_0$ and $f_i\alfa_i$ are null polynomials on $\Ralfak$ for $i=1,\dots,k$.
	\end{corollary}
	Theorem~\ref{CHS4} implies the following corollary, which determines whether two polynomials $f,g\in{\Ralfak}[x]$ induce the same function on $\Ralfak$.  
	\begin{corollary}\label{CHS6} \label{CHSGencount}
		
		Let $f =f_0 +\sum\limits_{i=1}^{k}f_i \alfa_i$ and $g=g_0+\sum\limits_{i=1}^{k}g_i \alfa_i $, where 
		$f_0,\ldots, f_k, g_0,\ldots, g_k \in R[x]$.
		
		Then $f \quv g$ on $\Ralfak$ if and only if the following 
		conditions hold:
		\begin{enumerate}
			\item
			$[f_i]_R= [g_i]_R$ for $i=0,\dots,k$;
			\item
			$[f_0']_R = [g_0']_R$.
		\end{enumerate}
		
		In other words, $f \quv g$ on $\Ralfak$ if and only if
		the following  congruences hold:
		
		\begin{enumerate}
			
			\item
			$f_i \equiv g_i  \mod \Null$ for $i=1,\ldots,k$;
			\item
			$f_0 \equiv g_0  \mod \Nulld$.
		\end{enumerate}
	\end{corollary}
	\begin{proof}
		It is sufficient to consider the polynomial $h=f-g$ and notice that $f \quv g$ on $\Ralfak$ 
		if and only if $h \quv 0$ on $\Ralfak$. 
	\end{proof}
	\comment{
		\begin{corollary}
			The number of pairs of functions
			$(F,E)$ with $F\colon R\longrightarrow R$, $G\colon R\longrightarrow R$ 
			arising as $([f]_R, [f']_R)$ for some $f\in R[x]$ is equal to $\left[R[x]\colon \Nulld\right]$. 
		\end{corollary}
		\begin{proof}
			Set 
			\[\mathcal{A}=\{(F,E)\in \mathcal{F}(R)\times \mathcal{F}(R)\mid  \exists f\in R[x] \text{ such that }f,f'\text{ induce } F,E  \text{ respectively}\}.\]
			Define $\psi\colon R[x] \longrightarrow \mathcal{A}$ by $\psi(f)=([f]_R,[f']_R)$. 
			It is a routine verification to show that $\psi$ is a group epimorphism of additive groups with  $\ker\psi=N'_R$. Hence by the First Isomorphism Theorem 
			of groups, we get 
			$[R[x]\colon N'_R]=|\mathcal{A}|$. 
	\end{proof}} 
	Recall from Definition~\ref{CHSequvfun} that $\PolFun[\Ralfak]$ denotes the set of polynomial functions on $\Ralfak$. In the following proposition, we derive a counting formula for $\PolFun[\Ralfak]$ depending on the indices of the ideals $\Null, \Nulld$. 
	\begin{proposition} \label{CHSfirstcountfor}
		The number of polynomial functions on $\Ralfak$ is given by
		\[|\PolFun[\Ralfak]|= \big[R[x]\colon \Nulld \big]\big[R[x]\colon \Null\big]^k.\]

	\end{proposition}
	
	\begin{proof}
		
		Let  $f =f_0 +\sum\limits_{i=1}^{k}f_i \alfa_i$ and $g=g_0+\sum\limits_{i=1}^{k}g_i \alfa_i $
		where $f_0,\ldots ,f_k,g_0,\ldots ,g_k \in R[x]$.  Then by Corollary~\ref{CHS6},
		$f  \quv g$  on $\Ralfak$ if and only if $f_0 \equiv g_0  \mod \Nulld$
		and $f_i \equiv g_i  \mod \Null$ for $i=1,\ldots,k$.
		
		Define $\varphi\colon \bigoplus\limits_{i=0}^{k} R[x] \longrightarrow \mathcal{F}(\Ralfak)$ 
		by $\varphi(f_0,\ldots,f_k)= [f]$, where $[f]$ is the function induced 
		on $\Ralfak$ by $f =f_0 +\sum\limits_{i=1}^{k}f_i \alfa_i$. 
		Then $\varphi$ is a group epimorphism of additive groups with  $\ker\varphi=\Nulld\times\bigoplus\limits_{i=1}^{k}  \Null$ by Theorem~\ref{CHS4}. 
		Hence \[|\mathcal{F}(\Ralfak)|=
		[\bigoplus\limits_{i=0}^{k} R[x]\colon \Nulld\times \bigoplus\limits_{i=1}^{k}  \Null]=
		[R[x]\colon \Nulld][R[x]\colon \Null]^k.\]

	\end{proof}
	The following proposition gives an upper bound for the minimal  degree of a representative of a polynomial function on $\Ralfak$.  
	\begin{proposition}\label{CHSsur}
		Let $h_1\in { \Ralfak}{[x]}$ and $ h_2 \in R[x]$ be  monic null polynomials on $\Ralfak$ and $R$, respectively, such that $\deg h_1= d_1$ 
		and  $\deg h_2=d_2$.
		
		Then every polynomial function 
		$F\colon \Ralfak\longrightarrow \Ralfak$ 
		is induced by a polynomial  $f =f_0 +\sum\limits_{i=1}^{k}f_i\alfa_i$, where $f_0,\ldots,f_k \in R[x]$ 
		such that $\deg f_0 <d_1$ and  $\deg   f_i < d_2$ for $i=1,\ldots,k$. 
		
		Moreover, if  $F$ is induced by a polynomial $f\in R[x]$ and 
		$h_1\in R[x]$ (rather than in ${\Ralfak}[x]$), then there 
		exists a polynomial $g\in R[x]$ with $\deg g<d_1$, such that 
		$[g]_R=[f]_R$ and $[g']_R=[f']_R$.
	\end{proposition}
	
	\begin{proof} Suppose that $h_1\in {\Ralfak}[x]$ is a monic null polynomial
		on $\Ralfak$ of degree $d_1$. Let $g \in {\Ralfak}[x]$ be a polynomial 
		that represents $F$. 
		By the division algorithm,  we have  $g(x) =q(x)h_1(x)+r(x)$ 
		for some $r,q \in {\Ralfak}[x]$, where $\deg r \le d_1 -1$. 
		Then clearly, $r(x)$ represents $F$. By Lemma~\ref{CHS21}, 
		$r=f_0+\sum\limits_{i=1}^{k}r_i\alfa_i$ 
		for some $f_0,r_1,\ldots,r_k\in R[x]$, and it is obvious that 
		$\deg f_0,\deg r_i \le d_1-1$ for $i=1,\dots,k$. 
		Now let $h_2\in R[x]$ be a monic null polynomial on $R$ of degree $d_2$.  
		Again,  by the division algorithm, we have for $i=1,\ldots,k$, $r_i(x) =q_i(x)h_2(x)+f_i(x)$ 
		for some $f_i ,q_i \in R[x]$, where $\deg f_i \le d_2 -1$.
		Then  by Corollary~\ref{CHS6},  $r_i\alfa_i \quv f_i\alfa_i$ on $\Ralfak$. 
		Thus $f =f_0 +\sum\limits_{i=1}^{k}f_i \alfa_i$ is the desired polynomial. \\
		For the second part, the existence of $g\in R[x]$ with $\deg g<d_1$ such that
		$f \quv g $ on $\Ralfak$ follows by the same argument given in the previous part.
		By Corollary~\ref{CHSGencount},   $[g]_R=[f]_R$ and $[g']_R=[f']_R$.   
	\end{proof}
	\begin{remark}\label{CHSexistmonicn}
		Let $h(x)=\prod\limits_{r\in R}(x-r)^2$. Then   $h$ is a monic polynomial in $R[x]$, and by Lemma~\ref{CHS31}, it is a null polynomial on $\Ralfak$. This shows that the monic null polynomial mentioned in the last part of Proposition~\ref{CHSsur} always exists.
	\end{remark}
	\section{ Permutation  polynomials on  $\Ralfak$}\label{Sec04}
	
	This section deals with the group of polynomial permutations on $\Ralfak$.  In the following theorem, we give a characterization for a permutation polynomial on  $\Ralfak$. 
	\begin{theorem}\label{CHSGenper} 
		Let  $R$ be a finite ring. Let $f =f_0+ \sum\limits_{i=1}^{k}f_i \alfa_i$, 
		where $f_0,\ldots,f_k \in R[x]$. Then $f$ is a permutation polynomial
		on $\Ralfak$ if and only if the following conditions hold:
		\begin{enumerate}
			\item $f_0$ is a permutation polynomial on $R$;
			\item for all $a\in R$,  $f_0'(a)$ is a unit in $R$.
		\end{enumerate}
		
	\end{theorem}
	
	\begin{proof} 
		$(\Rightarrow)$ 
		Let $c\in R$. Then $c\in \Ralfak$. Since $f$ is a 
		permutation polynomial on $\Ralfak $, there exist
		$a_0,\ldots,a_k \in R$ such that $f(a_0+ \sum\limits_{i=1}^{k}a_i \alfa_i)= c$.
		Thus,  by Lemma~\ref{CHS3}, $$f_0(a_0)+ \sum\limits_{i=1}^{k}(a_if_0'(a_0)+f_i(a_0))\alfa_i = c.$$
		So $f_0(a_0)=c$, therefore $f_0$ is onto, and hence a permutation polynomial on $R$. 
		
		Let $a\in R$ and suppose that $f_0'(a)$ is a non-unit in  $R$. 
		Then $f_0'(a)$ is a zerodivisor of $R$. 
		Let $b\in R$, $b\ne 0$, such that $bf_0'(a)=0$. 
		Then, by Lemma~ \ref{CHS02}, \[f(a+\sum\limits_{i=1}^{k}b\alfa_i)=f_0(a)+\sum\limits_{i=1}^{k}(bf_0'(a)+f_i(a))\alfa_i =f_0(a)+\sum\limits_{i=1}^{k}f_i(a)\alfa_i=f(a).\]
		So $f$ is not one-to-one, which is a contradiction. This proves (2).\\
		($\Leftarrow$) It is enough to show that $f$ is one-to-one. 
		Let $a_0,\ldots,a_k,b_0,\ldots,b_k  \in R$ such that 
		$$f(a_0+\sum\limits_{i=1}^{k}a_i\alfa_i)=f(b_0+\sum\limits_{i=1}^{k}b_i\alfa_i),$$  that is, \ 
		$$f_0(a_0)+\sum\limits_{i=1}^{k}(a_if_0'(a_0)+f_i(a_0))\alfa_i = f_0(b_0)+\sum\limits_{i=1}^{k}(b_if_0'(b_0)+f_i(b_0))\alfa_i$$ by Lemma~\ref{CHS02}.
		Then  we have $f_0(a_0)= f_0(b_0)$ and $a_if_0'(a_0)+f_i(a_0)= b_if_0'(b_0)+f_i(b_0)$ for $i=1,\ldots,k$.
		Hence $a_0= b_0$   since $f_0$ is a permutation polynomial on $R$. 
		Then, since $f_0'(a_0)$ is a unit in $R$,  $a_i=b_i$ follows for $i=1,\ldots,k$.  
	\end{proof}
	
	Theorem~\ref{CHSGenper} shows that the criterion 
	to be a permutation polynomial on $\Ralfak$ depends only
	on $f_0$, and  implies  the 
	following corollary.
	
	\begin{corollary}\label{CHSPPfirstcoordinate}
		Let $f =f_0 +\sum\limits_{i=1}^{k}f_i \alfa_i$,
		where $f_0,\dots,f_k \in R[x]$. 
		Then the following statements are equivalent:
		\begin{enumerate}
			\item $f$ is a permutation polynomial
			on $\Ralfak$; 
			\item $f_0+f_i\alfa_i$ is a permutation polynomial on $R[\alfa_i]$ for every $i\in \{1,\ldots,k\}$;
			\item $f_0$ is a permutation polynomial on $\Ralfak$;
			\item $f_0$ is a permutation polynomial on $R[\alfa_i]$ for every $i\in \{1,\ldots,k\}$.
		\end{enumerate}
	\end{corollary}
	Recall from definition~\ref{CHSequvfun}
	that    $\PrPol[A]$ stands for the group of polynomial permutations on the ring  $A$.
	\begin{corollary}
		The group $\PrPol[{R[\alfa_i]}]$ is embedded in $\PrPol[\Ralfak]$ for every $i= 1,\ldots,k$.
	\end{corollary}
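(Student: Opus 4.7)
The plan is to exhibit, for each $i \in \{1, \ldots, k\}$, an explicit injective group homomorphism
\[
\Phi_i : \PrPol[{R[\alfa_i]}] \longrightarrow \PrPol[\Ralfak],
\]
arising from the natural subring inclusion $R[\alfa_i] \hookrightarrow \Ralfak$ (sending $\alfa_i \mapsto \alfa_i$). Concretely, given $F \in \PrPol[{R[\alfa_i]}]$, I would pick any representing permutation polynomial $f = f_0 + f_i \alfa_i \in R[\alfa_i][x]$ with $f_0, f_i \in R[x]$, view it as an element of $\Ralfak[x]$ (with vanishing coefficients at $\alfa_j$ for every $j \ne i$), and set $\Phi_i(F) := [f]_{\Ralfak}$.

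To verify that $\Phi_i(F)$ actually lies in $\PrPol[\Ralfak]$, I would invoke Theorem~\ref{Genper}: it characterizes permutation polynomials over any ring of the form $\Ralfak$ (and in particular over $R[\alfa_i]$) through two conditions that mention only $f_0$ and $R$, namely that $f_0$ permute $R$ and that $f_0'(a)$ be a unit in $R$ for every $a \in R$. Since these conditions involve no dual variable whatsoever, they transfer verbatim from $R[\alfa_i]$ to $\Ralfak$; this is also the content of the equivalence of (1) and (4) in Corollary~\ref{PPfirstcoordinate}.

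For well-definedness, suppose $f, g \in R[\alfa_i][x]$ both represent $F$ on $R[\alfa_i]$, with $f = f_0 + f_i \alfa_i$ and $g = g_0 + g_i \alfa_i$. By Corollary~\ref{Gencount} applied to $R[\alfa_i]$, we have $f_0 \equiv g_0 \mod \Nulld$ and $f_i \equiv g_i \mod \Null$. The same Corollary~\ref{Gencount}, applied now to $\Ralfak$, requires in addition only the trivial congruences $0 \equiv 0 \mod \Null$ at each index $j \ne i$, so $f \quv g$ on $\Ralfak$. The homomorphism property is then clear, since composition of polynomial functions corresponds to polynomial substitution, which is insensitive to whether one views $f, g$ as elements of $R[\alfa_i][x]$ or of $\Ralfak[x]$. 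Finally, injectivity follows from the fact that $R[\alfa_i]$ is a subring of $\Ralfak$ closed under evaluation of any polynomial in $R[\alfa_i][x]$, so $\Phi_i(F)$ restricts to $F$ on $R[\alfa_i]$; hence $\Phi_i(F) = \Phi_i(G)$ forces $F = G$. The only step requiring any real care is the well-definedness, and even there the argument reduces immediately to Corollary~\ref{Gencount}.
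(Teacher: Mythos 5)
Your proof is correct and follows essentially the same route as the paper's: the same map $F \mapsto [f]_{\Ralfak}$ for a representative $f=f_0+f_i\alfa_i$, with membership in $\PrPol[\Ralfak]$ justified by Theorem~\ref{Genper}/Corollary~\ref{PPfirstcoordinate} and well-definedness by Corollary~\ref{Gencount}. The only cosmetic difference is that you obtain injectivity by restricting the induced permutation back to the subring $R[\alfa_i]$, whereas the paper cites Corollary~\ref{Gencount} once more; both arguments are valid.
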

	\begin{proof}
		Fix $i\in \{1,\ldots,k\}$ and let $F\in \PrPol[{R[\alfa_i]}]$. Then $F$ is induced by $f=f_0+f_i\alfa_i$ for some $f_0,f_i \in R[x]$. Furthermore, $f_0+f_i\alfa_i$ is permutation polynomial on $\Ralfak$ by Corollary~\ref{CHSPPfirstcoordinate}. 
		Define a function
		$\psi\colon \PrPol[{R[\alfa_i]}] \longrightarrow \PrPol[\Ralfak]$ by $\psi(F)=[f]_{\Ralfak} $, where $[f]_{\Ralfak}$ denotes the function induced by $f$ on $\Ralfak$. 
		By Corollary~\ref{CHSGencount}, $\psi$ is well defined and one-to-one. Now if $F_1\in \PrPol[{R[\alfa_i]}]$ is 
		induced by $g\in R[\alfa_i][x]$, then $f\circ g$ induces $F\circ F_1$ on $R[\alfa_i]$. Hence,
		\begin{align*}
		\psi(F\circ F_1) & =[f\circ g]_{\Ralfak}\\
		& =[f]_{\Ralfak} \circ[g]_{\Ralfak} \text{   since }f,g\in \Ralfak[R][x]\\
		& =\psi(F)\circ\psi(F_1). 
		\end{align*}
		This completes the proof. 
	\end{proof}
	
	\begin{remark}
		We will show in Proposition~\ref{CHSmovdreivat} that the condition on the derivative in Theorem~\ref{CHSGenper} is redundant, when $R$ is a direct sum of local rings none of which is a field.  
	\end{remark}
	\begin{lemma}\cite[Theorem~3]{Necha}\label{CHSMac}
		Let $R$ be a finite local ring with a maximal ideal $M\ne \{0\}$ and suppose that $f\in R[x]$. 
		Then $f$ is a permutation polynomial on $ R$ 
		if and only if the following conditions hold:
		\begin{enumerate}
			\item $f$ is a permutation polynomial on $R/M$;
			\item  for all $a\in R$, $f'(a)\ne 0\mod{M}$.
		\end{enumerate}
	\end{lemma}
	\begin{lemma}\label{CHSdirectper}
		Let $R$ be a finite ring  and suppose that $R=\oplus_{i=1}^{n}R_i$, where $R_i$ is local for $i=1,\dots,n$. Let $f=(f_1,\ldots,f_n)\in R[x]$, where $f_i\in R_i[x]$. Then $f$ is a permutation polynomial on $R$ if and only if  $f_i$ is a permutation polynomial on  $R_i$ for $i=1,\dots,n$.
	\end{lemma}
	\begin{proof}
		$(\Rightarrow)$  Suppose that $f$ is a permutation polynomial on $R$ and fix an $i$. Let $b_i \in R_i$. Then $(0,\dots,b_i,\ldots,0)\in R$. Thus  there exists $a=(a_1,\dots,a_i,\ldots,a_n)\in R$, where $a_j \in R_j$, $j=1,\dots,n$ such that $f(a)=(f_1(a_1),\ldots,f_i(a_i),\ldots,f_n(a_n))=(0,\dots,b_i,\dots,0)$. Hence $f_i(a_i)=b_i$, and therefore $f_i$ is surjective, whence $f_i$ is a permutation polynomial on $R_i$.
		
		$(\Leftarrow)$ Easy and left to the reader. 
	\end{proof}
	From now on, let $R^\times$ denote the group of units of $R$.
	\begin{proposition}\label{CHSmovdreivat}
		Let $R$ be a finite ring which is a direct sum of  local rings which are not fields, and let $f=f_0+\sum\limits_{i=1}^{k}f_i \alfa_i$, where $f_0,\ldots,f_k\in  R[x]$. Then $f$ is a permutation polynomial on $\Ralfak$ if and only if $f_0$ is a  permutation polynomial on $R$.
	\end{proposition}
	\begin{proof}
		($\Rightarrow$) Follows by Theorem~\ref{CHSGenper}.
		
		($\Leftarrow$) Assume that  $f_0$ is a permutation polynomial on $R$.   By Theorem~\ref{CHSGenper}, we need only show that $f_0'(r)\in R^\times$ for every $r\in R$. Write $f_0=(g_1,\ldots,g_n)$, where $g_i\in R_i[x]$ for $i=1,\ldots,n$. 
		Then $g_i$ is a permutation polynomial on $R_i$ for $i=1,\ldots,n$ by Lemma~\ref{CHSdirectper}. 
		Now let $r\in R$, so $r=(r_1,\ldots,r_n)$, where $r_i\in R_i$. 
		Hence $f'_0(r)=(g'_1(r_1),\ldots,g'_n(r_n))$ but $g'_i(r_i)\in R_i^{\times}$  by Lemma~\ref{CHSMac} 
		for $i=1,\dots,n$. Therefore $f'_0(r)=(g'_1(r_1),\ldots,g'_n(r_n))\in  R^{\times}$, i.e, $f'_0(r)$ is a unit in $R$ for every $r\in R$. Thus $f_0$ satisfies the conditions of Theorem~\ref{CHSGenper}. Therefore $f$ is a permutation polynomial on $\Ralfak$. 
	\end{proof}
	\begin{corollary}\label{CHSpersumnug}
		Let $R$ be a finite ring which is a direct sum of local rings which are not fields. Let $f\in R[x]$ be a permutation polynomial on $\Ralfak$. Then $f+h$ is a permutation polynomial on $\Ralfak$ for every $h\in \Null[R]$. In particular, $x+h$ is a permutation polynomial on $\Ralfak$ for every $h\in \Null[R]$.
	\end{corollary}
	
	\begin{proposition} \label{CHSGeperncount}
		Let $R$ be a finite ring. 
		Let $B$ denote  the number of pairs of functions $(H,G)$ with
		\[H\colon R\longrightarrow R \text{ bijective  and }  G\colon R\longrightarrow R^\times\] 
		that occur as 
		$([g],[g'])$ for some $g\in R[x]$. 
		Then  the number  of polynomial permutations 
		on $\Ralfak$  is given by \[|\PrPol[\Ralfak]|=B\cdot |\PolFun[R]|^k.\] 
	\end{proposition}
	\begin{proof}
		Let $F\in \PrPol[\Ralfak]$. Then by definition   $F$ is induced by a polynomial $f =f_0 +\sum\limits_{i=1}^{k}f_i \alfa_i$, where  $f_0,\ldots,f_k\in R[x]$. By Theorem~\ref{CHSGenper}, \[[f_0]\colon R\longrightarrow R \text{ bijective, }  [f'_0]\colon R\longrightarrow R^\times \text{ and } [f_i]\text{ is arbitrary in }\PolFun \text{ for }i=1,\ldots,k. \] 
		The rest follows by Corollary~\ref{CHSGencount}. 
	\end{proof}
	
	In the next section, we show that the number $B$ of Proposition~\ref{CHSGeperncount} depends on the order of the pointwise stabilizer  of $R $ in the group $\PrPol[\Ralfak]$. However, when $R$ is a finite field, we can find explicitly this number. For this, we need the following well known lemma.
	\begin{lemma}\label{CHSPerf}
		Let $\mathbb{F}_q$ be a finite field with $q$ elements. 
		Then for all functions
		\[F,G\colon \mathbb{F}_q\longrightarrow \mathbb{F}_q, \]
		there exists $f\in\mathbb{F}_q[x]$ such that  
		\[(F, G)=([f] ,[f']) \text{ and } \deg f<2q.\] 
	\end{lemma}
	\begin{proof}
		Let $f_0,f_1\in\mathbb{F}_q[x]$ such that $[f_0] =F$ 
		and $[f_1] =G$ and set 
		\[f(x) = f_0(x) + (f'_0(x) - f_1(x))(x^q-x).\]
		Then \[f'(x) =(f''_0(x) - f'_1(x))(x^q-x)+f_1(x).\]
		Thus $[f]=[f_0]=F$ and  $[f']=[f_1]=G$ since $(x^q-x)$ is a null polynomial on $\mathbb{F}_q$. 
		Moreover, since $(x^q-x)$ is a null
		polynomial on $\mathbb{F}_q$, we can choose $f_0,f_1$ such that $\deg f_0,\deg f_1<q$. Hence $\deg f<2q$.  
	\end{proof}
	
	\begin{proposition}\label{CHS13.111}
		Let $\mathbb{F}_q$ be a finite  field with $q$ elements.  The number  of polynomial 
		permutations on $\mathbb{F}_q[\alfa_1,\ldots,\alfa_k]$  is given by 
		\[|\mathcal{P}(\mathbb{F}_q[\alfa_1,\ldots,\alfa_k])|=q!(q-1)^qq^{kq}.\]
	\end{proposition} 
	
	\begin{proof}
		Let $\mathcal{B}$ be the set of pairs of functions $(F,G)$ such that 
		\[F\colon \mathbb{F}_q\longrightarrow \mathbb{F}_q  \text{ bijective and } G\colon \mathbb{F}_q\longrightarrow \mathbb{F}_q\setminus\{0\}.\] By Lemma~\ref{CHSPerf}, each $(F,G)\in \mathcal{B}$ arises as $([f],[f'])$ for some $f\in \mathbb{F}_q[x]$.
		By Proposition~\ref{CHSGeperncount}, 
		$|\mathcal{P}(\mathbb{F}_q[\alfa_1,\ldots,\alfa_k])|=|\mathcal{B}|\cdot|\PolFun[\mathbb{F}_q]|^k$. Clearly $|\mathcal{B}|=q!(q-1)^q$ and $|\PolFun[\mathbb{F}_q]|^k=q^{kq}$. 
	\end{proof}
	
	\section{The stabilizer of $R$ in the group of polynomial
		permutations of $\Ralfak$}\label{CHSsc4}
	The main object of this section is to describe the order of the subgroup of those polynomial permutations on $\Ralfak$  that fix pointwise  each element of $R$, and then to use this order to find a counting formula for the number of polynomial permutations on $\Ralfak$.
	\begin{definition}\label{CHSstd}
		Let
		$\Stabk=\{F\in \PrPol[\Ralfak]\mid 
		F(a)=a \text{ for every } a\in R\}$. 
	\end{definition}
	Evidently, $\Stabk[R]$ is a subgroup of $\PrPol[\Ralfak]$.
	\begin{proposition}\label{CHSfirststab}
		Let $R$ be a finite  ring. Then \[\Stabk=\{F\in \PrPol[\Ralfak]\mid F 
		\textnormal{ is induced by } x+h(x), h \in \Null[R] \}.\]
		In particular, every element of $\Stabk$ is induced by a polynomial in $R[x]$.
	\end{proposition}
	\begin{proof}
		It is obvious that 
		\[\Stabk[R]\supseteq\{F\in \PrPol[\Ralfak]\mid F 
		\textnormal{ is induced by } x+h(x), h \in \Null[R] \}.\]
		For the other inclusion, let $F\in \mathcal{P}(\Ralfak)$ 
		such that $F(a)=a$ for every $a\in R$. 
		Then $F$ is represented by $f_0+\sum\limits_{i=1}^{k}f_i\alfa_i$,  
		where $f_0,\ldots, f_k\in R[x]$, and  
		$a=F(a)=f_0(a)+\sum\limits_{i=1}^{k}f_i(a)\alfa_i$ for every $a\in R$. 
		It follows that $f_i(a)=0$ for every $a\in R$, i.e., 
		$f_i$ is a null polynomial on $R$ for $i=1,\dots,k$. 
		Thus  $f_0+\sum\limits_{i=1}^{k}f_i\alfa_i\quv f_0$ on $\Ralfak$
		by Corollary~\ref{CHSGencount}, that is, $F$ is represented by $f_0$. 
		Also, $f_0\quv id_{R}$ on $R$, where $id_{R}$ is the identity function on $R$, and  therefore  
		$f_0(x)=x+h(x)$ for some $h\in \Null$  by Remark~\ref{nulremark}. 
	\end{proof}
	We have the following theorem, when $R$ is a finite field, which describes the order of $\Stabk[\mathbb{F}_q]$. The proof
	is almost the same as in \cite[Theorem~4.11]{Haki}.
	\begin{theorem} \label{CHS1301}
		Let $\mathbb{F}_q$ be a finite  field with $q$ elements. Then: 
		\begin{enumerate}
			\item $|\Stabk[\mathbb{F}_q]| =|\{[f']_{\mathbb{F}_q}\mid  f\in \Null[\mathbb{F}_q] 
			\text{ and for every } a\in\mathbb{F}_q, 
			f'(a)\ne -1 \}|$;
			\item $|\Stabk[\mathbb{F}_q]|  =|\{[f']_{\mathbb{F}_q}\mid  f\in \Null[\mathbb{F}_q], 
			\deg f<2q \text{ and for every } a\in\mathbb{F}_q, 
			f'(a)\ne -1 \}|$;
			\item $|\Stabk[\mathbb{F}_q]|  =(q-1)^q$. \label{CHSst1}
		\end{enumerate} 
	\end{theorem}
	\begin{proof}
		We begin with the proof of (1) and (2). 
		Set  
		\[A=\{[f']_{\mathbb{F}_q}\mid  f\in \Null[\mathbb{F}_q] 
		\text{ and for every } a\in\mathbb{F}_q, 
		f'(a)\ne -1 \}.\]
		We define a bijection $\varphi$ from 
		$\Stabk[\mathbb{F}_q]$  to the set  $A$. 
		If $F\in \Stabk[\mathbb{F}_q]$, then it is represented by $x+h(x)$, where 
		$h\in \mathbb{F}_q[x]$ is a null polynomial  on $\mathbb{F}_q$,
		by Proposition~\ref{CHSfirststab}. 
		Now $h'(a)\ne -1 $  for every $a\in \mathbb{F}_q$,
		by Theorem~\ref{CHSGenper}, 
		whence $[h']_{\mathbb{F}_q}\in A$.  Now,   set $\varphi(F)=[h']_{\mathbb{F}_q}$. Then 
		Corollary~\ref{CHS6} shows that $\varphi$ is well-defined and injective. To show $\varphi$ is surjective, let
		$[h']_{\mathbb{F}_q}\in A$, where $h\in \Null[\mathbb{F}_q]$. Then,
		by Theorem~\ref{CHSGenper} and Proposition~\ref{CHSfirststab}, $F=[x+h]_{\Ralfak[\mathbb{F}_q]}\in \Stabk[\mathbb{F}_q]$.
		Thus $\varphi(F) =[h']_{\mathbb{F}_q}$.
		Moreover, by Lemma~\ref{CHSPerf}, $h$ can be chosen such that 
		$\deg h<2q$. 
		
		Next, we prove (3).  By (1), 
		$$|\Stabk[\mathbb{F}_q]|\le|\{G \colon \mathbb{F}_q\longrightarrow
		\mathbb{F}_q\setminus\{-1\} \}|= (q-1)^q.$$
		Now for every function $G \colon \mathbb{F}_q\longrightarrow \mathbb{F}_q\setminus\{-1\}$ there exists a polynomial
		$f\in \Null[\mathbb{F}_q]$ such that $[f']_{\mathbb{F}_q}=G$ by Lemma~\ref{CHSPerf}. Thus  $f(x)+x$ is a permutation polynomial on $\Ralfak[\mathbb{F}_q]$ by Theorem~\ref{CHSGenper}. Obviously, $x+f(x)$ induces the
		identity on $\mathbb{F}_q$, and hence $[x+f(x)]_{\Ralfak[\mathbb{F}_q]}\in \Stabk[\mathbb{F}_q]$. Therefore every element of the set $\{G \colon \mathbb{F}_q\longrightarrow
		\mathbb{F}_q\setminus\{-1\} \}$ corresponds to an element of  $\Stabk[\mathbb{F}_q]$, from which we conclude that $| \Stabk[\mathbb{F}_q]|\ge (q-1)^q$. 
		This completes the proof. 
	\end{proof}
	\begin{notation}
		
		Let 	$$
		\mathcal{P}_R(\Ralfak)=\{F\in \PrPol[\Ralfak]\mid  F=[f]_{\Ralfak} \textnormal{ for some }
		f\in R[x]\}
		.$$
		In similar manner, let
		$\mathcal{P}_R(R[\alfa_i])=\{F\in \PrPol[{R[\alfa_i]}]\mid  F=[f]_{R[\alfa_i]} \textnormal{ for some }
		f\in R[x]\}.$ 
	\end{notation}
	We now show that $\mathcal{P}_R(\Ralfak)$ is a subgroup of $\PrPol[\Ralfak]$. 
	\begin{proposition}\label{CHSkthrelation}
		The set $\mathcal{P}_R(\Ralfak)$ is a subgroup of $\PrPol[\Ralfak]$ and\\
		$\mathcal{P}_R(\Ralfak)\cong \mathcal{P}_R(R[\alfa_i])$ for $i=1,\ldots,k$.
	\end{proposition}
	\begin{proof}
		It is clear that $\mathcal{P}_R(\Ralfak)$ is closed under composition. Since it is finite, it is a subgroup of $\PrPol[\Ralfak]$. Let $F\in \mathcal{P}_R(\Ralfak)$ and suppose that $F$ is induced by  $f\in R[x]$. 
		Define \[
		\psi\colon \mathcal{P}_R(\Ralfak) \longrightarrow \mathcal{P}_R(R[\alfa_i]),\quad
		F\mapsto [f]_{R[\alfa_i]}.
		\]
		Then $\psi$ is well defined by Corollary~\ref{CHSGencount}, and evidently it is a homomorphism. By Corollary~\ref{CHSPPfirstcoordinate}, $\psi$ is surjective. To show that $\psi$ is one-to-one, let $F_1\in \mathcal{P}_R(\Ralfak)$ be induced by $g\in R[x]$ with $F\ne F_1$. Then either $f\nquv g$ on $R$ or $f'\nquv g'$ on $R$ by Corollary~\ref{CHSGencount}. Thus $\psi(F)=[f]_{R[\alfa_i]}\ne \psi(F_1)=[g]_{R[\alfa_i]}$. 
	\end{proof}
	We will see  that $\Stabk[R]$ is a normal subgroup of $\mathcal{P}_R(\Ralfak)$. But first we prove the following
	fact.
	\begin{proposition} \label{CHSperun}
		Let $R$ be a finite ring. Then   for every $F\in \mathcal{P}(R)$ there exists a polynomial $f\in  R[x]$ such that $F$ is induced by $f$ and $f'(r)\in R^{\times}$ for every $r\in R$. 
	\end{proposition}
	\begin{proof}
		Set $\mathcal{P}_u(R)=\{F \in \mathcal{P}(R)\mid  F\text{ is induced by }f\in R[x], f'\colon R\longrightarrow R^{\times} \}$.
		By definition $\mathcal{P}_u(R)\subseteq\mathcal{P}(R)$. 
		Let $F \in  \mathcal{P}(R)$. Then $F$ is induced by $f\in R[x]$.  
		Since $R$ is finite, $R=\oplus_{i=1}^{n}R_i$, where $R_i$ are local rings. We distinguish two cases. For the first case, we  suppose that  no $R_i$ is a field. Then   $f$ is a permutation polynomial on $\Ralfak$ by Proposition~\ref{CHSmovdreivat}.
		Hence $f'(a)\in R^{\times}$ for every $a\in R$ by Theorem~\ref{CHSGenper}.  So $F\in \mathcal{P}_u(R)$.
		For the second case, we assume without loss of generality   that   $R_1,\ldots,R_r$ are fields and no  $R_{i}
		$ is a field for  $i>r$. We identify $R[x]$ with $\bigoplus\limits_{i=1}^{n} R[x]$ and  write $f=(f_1,\ldots,f_n)$ where $f_i\in R_i[x]$ for $i=1,\ldots,n$. By Lemma~\ref{CHSdirectper}, $f _i $ is a permutation polynomial on $  R_i$, for $i=1,\dots,n$.  Now  a similar argument like the one given in the first case  shows that $f'_i(a_i)\in R_i^{\times}$ for every $a_i\in R_i$ for $i=r+1,\dots,n$.
		On the other hand, there exists $g_j\in R_j[x]$ such that $g_j \quv f_j$ on $R_j$ and
		$g'_j(a_j)\in R_j^{\times}$ for every
		$a_j\in R_j$, $ j=1,\ldots,r$ by Lemma~\ref{CHSPerf}.
		Then take $g=(g_1,\dots,g_r,f_{r+1},\ldots,f_n)$. Thus $g\quv f$ on $R$ and $g'(a)\in R^{\times}$ for every $a\in R$. Therefore $g$ induces $F$ and $F\in \mathcal{P}_u(R)$. 
	\end{proof} 
	\begin{theorem}\label{CHSPZlemma}
		Let $R$ be a finite ring. Then:
		
		
		\begin{enumerate}
			\item\label{CHSfirstPZ}
			every element of $\mathcal{P}(R)$ occurs as
			the restriction to $R$ of some $F\in \mathcal{P}_R(\Ralfak)$;
			\item\label{CHSscndPZ}
			$\mathcal{P}_R(\Ralfak)$ contains
			$\Stabk[R] $ as a normal subgroup and 
			\[
			\raise2pt\hbox{$\mathcal{P}_R(\Ralfak)$} \big/ \lower2pt\hbox{$\Stabk[R]$}
			\cong \mathcal{P}(R).
			\]
		\end{enumerate}
	\end{theorem}
	
	\begin{proof}
		
		(\ref{CHSfirstPZ})  This is obvious from 
		Proposition~\ref{CHSperun} and Theorem~\ref{CHSGenper}.
		
		(\ref{CHSscndPZ}) $\Stabk[R]$ is contained in $\mathcal{P}_R(\Ralfak)$,
		because every element of
		$\Stabk[R]$ can be represented by a polynomial with  coefficients in $R$
		by Proposition~\ref{CHSfirststab}.
		Let $F\in \mathcal{P}_R(\Ralfak)$ be represented by  $f\in R[x]$. Then define  
		$\varphi\colon \mathcal{P}_R(\Ralfak) \longrightarrow \mathcal{P}(R)$
		by $\varphi(F)= [f]_R$. Now $\varphi$ is well defined by Corollary~\ref{CHSGencount}, and it is 
		a group homomorphism with $\ker\varphi = \Stabk[R]$.
		By Proposition~\ref{CHSperun}, $\varphi$ is surjective. 
	\end{proof}
	\begin{corollary} \label{CHSmshi}
		
		For any fixed $F\in \PrPol[R]$,
		\[
		\left|\Stabk[R]\right|=\left|\{[f']_R\mid 
		f\in R[x], [f]\in \mathcal{P}_R(\Ralfak) \text{ and } 
		[f]_ R= F \}\right|.
		\]
		
	\end{corollary}
	
	\begin{proof}
		Let $f\in R[x]$ be a permutation polynomial on $\Ralfak$ with 
		$[f]_R=F$. Such an $f$ exists by Theorem~\ref{CHSPZlemma} (\ref{CHSfirstPZ}).
		We denote by $[f]$ the permutation induced
		by $f$ on $\Ralfak$. 
		Then the coset of $[f]$ with respect to $\Stabk[R]$ has $|\Stabk[R]|$
		elements. By Theorem~\ref{CHSPZlemma} (\ref{CHSscndPZ}), this coset consists of 
		all polynomial permutations $G\in \mathcal{P}_R(\Ralfak)$ with 
		$[f]_R=G_{\big|R}$, where $G_{\big|R}$ is the restriction of the function $G$ to $R$.
		Let $g\in R[x]$ with $[g]=G$.
		By Corollary~\ref{CHSGencount}, $G\ne [f]$ if and only if 
		$[f']_R\ne [g']_R$. 
		Thus we have a bijection between the coset of
		$[f]$ with respect to $\Stabk[R]$ and the set of functions 
		$[g']_R$ occurring for $g\in R[x]$ such that
		$[g]=G$ permutes $\Ralfak$ and 
		$[f]_R=[g]_R$. 
	\end{proof}
	
	When $R$ is a finite ring which is a direct sum of local rings that are not fields,  Corollary~\ref{CHSmshi} is
	a special case of a general result (see Proposition~\ref{CHS14c}).
	
	We now employ Corollary~\ref{CHSmshi} to find the number of  polynomial permutations on $\Ralfak$ in terms of $|\Stabk[R]|$.
	\begin{theorem}\label{CHS14} 
		Let $R$ be a finite ring.	For any integer $k\ge 1$,
		\[|\PrPol[\Ralfak]|=|\PolFun[{R}]|^k\cdot
		|\PrPol[R]|\cdot |\Stabk[R]|.\]
		
	\end{theorem}
	
	\begin{proof}
		For $f\in R[x]$, let $[f]$ be the function induced by $f$ on $\Ralfak$.\\
		
		Set 
		$
		B = \bigcup\limits_{\rlap{$\scriptstyle{F\in \PrPol[R]}$}}
		\{[f']_R\mid  f\in R[x], [f]\in \mathcal{P}_R(\Ralfak) \text{ and } 
		[f]_R= F \}$.
		
		Then $|B|=|\PrPol[R]|\cdot |\Stabk[R]|$ 
		by Corollary~\ref{CHSmshi}.\\ 
		Now we define a function 
		$ \Psi\colon \PrPol[\Ralfak] \longrightarrow B\times \prod\limits_{i=1}^{k}  \PolFun[{R}]$ 
		as  follows: if $G\in\PrPol[\Ralfak]$  is induced by 
		$g=g_0+\sum\limits_{i=1}^{k}g_i\alfa_i$, where $g_0,\ldots,g_k \in R[x]$,
		we let $\Psi(G)=([g'_0]_R,[g_1]_R,\ldots,[g_k]_R)$. By Theorem~\ref{CHSGenper} and Corollary~\ref{CHS6}, $\Psi$ is
		well-defined and one-to-one. The surjectivity of 
		$\Psi$  follows   by  
		Theorem~\ref{CHSPZlemma} and Theorem~\ref{CHSGenper}. 
		Therefore
		\[|\PrPol[\Ralfak]|=|B\times\prod\limits_{i=1}^{k} \PolFun[R]|= 
		|\PrPol[R]|\cdot |\Stabk[R]|\cdot| \PolFun[R]|^k.\]  \end{proof} 
	
	\begin{definition} \label{CHS11.12}
		
		Let
		$\N[n]{R}=\{f\in R[x]\mid  f\in \Null[R]
		\textnormal{ with }\deg f < n\}$, and
		\[
		\Nd[n]{R}=\{f\in R[x]\mid 
		f\in \Nulld[R] \textnormal{ with }\deg f <n\}.\]
	\end{definition}

	In the following theorem, we obtain several descriptions for the order of the group $\Stabk[R]$ whenever $R$ is a direct sum of local rings which are not fields.
	\begin{theorem}\label{CHS12} 
		Let $R$ be a finite ring which is a direct sum of local rings that are not fields. Then the following hold.
		\begin{enumerate}
			\item\label{CHSscndstab}
			$
			|\Stabk[R]|  =|\{[f']_R\mid  f\in \Null[R] 
			\}|.
			$
			\item\label{CHSthirdstab}
			If there exists a monic null polynomial on $\Ralfak$ in $R[x]$ of degree~$n$, 
			then:
			\begin{enumerate}

				\item \label{CHSthirdstaba}
				
				$
				|\Stabk[R]|  =
				|\{[f']_R\mid  f\in \Null[R]  \textnormal{ with }\deg f<n\}|;
				$
				\item\label{CHSfourthstab} $
				|\Stabk[R]|  = [\Null[R] \colon \Nulld[R]]=
				\frac{|\N[n]{R}|}{|\Nd[n]{R}|}.
				$
			\end{enumerate}
		\end{enumerate}
		
	\end{theorem}
	
	\begin{proof}
		(\ref{CHSscndstab})
		We define a bijection $\varphi$ from 
		$\Stabk[R]$  to the set of  functions induced on
		$R$ by the   derivative of some null polynomial on $R$. 
		By Proposition~\ref{CHSfirststab},
		every $F\in \Stabk[R]$ is represented by $x+f(x)$, where $f\in R[x]$ 
		is a null polynomial on $R$. We set $\varphi(F)=[f']_R$. 
		Then Corollary~\ref{CHS6} shows that $\varphi$ is well-defined and injective,
		and Corollary~\ref{CHSpersumnug} shows that  it is surjective. \\
		(2) Monic null polynomials on  $\Ralfak$ with coefficients in $ R$ always exist by Remark~\ref{CHSexistmonicn}.
		
		(\ref{CHSthirdstaba}) If $g\in \Null[R]$, then by 
		Proposition~\ref{CHSsur}, there exists 
		$f\in R[x]$ with $\deg f<n$ such that 
		$[f]_R=[g]_R$ and $[f']_R=[g']_R$. Evidently, $f\in \Null[R]$.
		
		(\ref{CHSfourthstab})
		For computing the index, define $\varphi\colon \Null[R] \longrightarrow \PolFun[R]$
		by $\varphi(f)=[f']_R$.
		Clearly, $\varphi$ is a   homomorphism of additive groups. 
		Furthermore, 
		\[\ker\varphi=\Nulld[R] \textnormal{ and } 
		\im\varphi=\{[f']_R\mid  f\in \Null[R]\}, 
		\] and hence $\raise1.5pt\hbox{$\Null[R]$} \big/ \lower1.5pt\hbox{$\Nulld[R]$} \cong \{[f']_R\mid  f\in \Null[R]\}$. Therefore $|\Stabk[R]|=[\Null[R]\colon \Nulld[R]]$ by (\ref{CHSscndstab}).\\
		Finally consider the ratio, consider the sets $\N[n]{R}$ and $\Nd[n]{R}$ as
		defined in Definition~\ref{CHS11.12}.  The equivalence relation in 
		Definition~\ref{CHSequvfun} restricted to these two additive subgroups 
		and the analogous proof to the previous part show  that 
		\[|\Stabk[R]|=[\N[n]{R} \colon \Nd[n]{R}].\] 
	\end{proof}
	
	\begin{remark}
		\leavevmode
		\begin{enumerate}
			\item When $R=\mathbb{F}_q$  is a finite field,  we have shown  in Theorem~\ref{CHS1301}~(\ref{CHSst1}) that 
			$|\Stabk[\mathbb{F}_q]|=(q-1)!$. But we will see later that   \[[\Null[\mathbb{F}_q] \colon  \Nulld[\mathbb{F}_q]]=[\N[2q]{\mathbb{F}_q} \colon \Nd[2q]{\mathbb{F}_q}]=q^q.\]
			\item Even when $k=1$,    Theorem~\ref{CHS12} is still a generalization of \cite[Proposition~7.2]{Haki}.
			
		\end{enumerate}
		
	\end{remark}
	
	\begin{proposition}\label{CHS14c} 
		Let $R$ be a finite ring which is a direct sum of local rings that are 
		not fields. Then for any fixed $F\in \PolFun [R]$,
		\[
		|\Stabk[R]|=|\{[g']_{R}\mid 
		g\in R[x] \text{ with\/ } [g]_R=F \}|.
		\]
	\end{proposition}
	
	\begin{proof}
		Set
		\[A=\{ [g']_R \mid 
		g\in R[x] \text{ with\/ } [g]_R=F \},\] and
		fix $g_0\in R[x]$ with $[g_0]_R=F $. Then
		$g-g_0$ is a null polynomial on $R$
		for any $g\in R[x]$ with $  [g']_R\in A$.
		
		We define a bijection
		\[ \phi\colon A \longrightarrow \{[f']_R\mid 
		f\in \Null[R]\},\quad
		\phi([g']_R)=[(g-g_0)']_R .\]
		Since $[(g-g_0)']_R = [g']_R - [g_0']_R$,
		$\phi$ is well defined. 
		Further, $\phi$ is injective, because, for two distinct elements of $A$,
		$[g_1']_R\ne [g']_R$ implies
		that
		$[(g_1-g_0)']_R\ne [(g-g_0)']_R$.
		
		Now, consider
		$[f']_R$, where $f\in \Null[R]$.
		Then  $[g_0+f]_R=F$ and, thus,
		$[g'_0+f']_R$ is in $A$ and $\phi([g'_0+f']_R)=[f']_R $. 
		Therefore $\phi$ is surjective.
		
		By Theorem~\ref{CHS12}~(\ref{CHSscndstab}),
		\[
		|\Stabk[R]| = |\{[h']_R\mid  h\in \Null[R]\}| = |A|.
		\] 
	\end{proof} 
	
	\begin{remark}
		\leavevmode
		\begin{enumerate} 
			\item
			For any fixed polynomial function $F\in \PolFun$, Proposition~\ref{CHS14c} tells us that the cardinality of the    set
			\[ \{[g']_R\mid 
			g\in R[x] \text{ with\/ } [g]_R=F \}\]
			is independent from our choice of the polynomial function $F$.
			\item Note that Proposition~\ref{CHS14c} is a generalization of  \cite[Corollary~7.6]{Haki}
			which considers the case when   $k=1$   and $R=\mathbb{Z}_{p^n}$ ($n>1$).
		\end{enumerate}
	\end{remark}
	Next we show that   for all $k\ge 1$ the stabilizer groups $\Stabk[R]$ are isomorphic.
	\begin{theorem}\label{CHSstabiso}
		Let $R$ be a finite ring and let $k$ be a positive integer. Then
		$\Stabk[R] \cong St_{\alfa_i}(R)$ for $i=1,\ldots,k$.
	\end{theorem}
	\begin{proof}
		Fix   $i\in \{1,\ldots,k\}$. Then by the definition of dual numbers (for the case $k=1$), $R[\alfa_1]\cong R[\alfa_i]$. 
		Let $F\in \mathcal{P}_R(\Ralfak)$ and suppose that $F$ is induced by  $f\in R[x]$. 
		Define \[
		\psi\colon \mathcal{P}_R(\Ralfak) \longrightarrow \mathcal{P}_R(R[\alfa_i]),\quad
		F\mapsto [f]_{R[\alfa_i]}.
		\]
		The proof of Proposition~\ref{CHSkthrelation} shows that $\psi$ is an isomorphism. If $\phi$ denotes the restriction of $\psi$ to $\Stabk[R]$, then $\Stabk[R]\cong \phi(\Stabk[R])$.   
		Therefore, we need only  show that $\phi(\Stabk[R])= St_{\alfa_i}(R) $. Let $G\in St_{\alfa_i}(R)$. Then $G$ is induced by $x+h(x)$ for some $h\in \Null$ by Proposition~\ref{CHSfirststab} (with $k=1$). By Corollary~\ref{CHSPPfirstcoordinate} and Proposition~\ref{CHSfirststab}, $F=[x+h(x)]_{\Ralfak}\in \Stabk$.
		But then $\phi(F)=\psi(F)=[x+h(x)]_{R[\alfa_i]}=G$, hence $G\in \phi(\Stabk[R])$.  This shows that 
		$St_{\alfa_i}(R)\subseteq \phi(\Stabk[R])$. The other inclusion is similar.  
	\end{proof}
	We need the following lemma which is a straightforward result of the Third Isomorpism Theorem for rings. 
	\begin{lemma}\label{CHS2ndisoapli}
		Let $R$ be a finite ring. Then $[R[x]\colon \Nulld[R]]=[R[x] \colon \Null[R]][\Null[R] \colon \Nulld[R]]$.
	\end{lemma}
	
	\begin{theorem}\label{CHSCounttheor}
		Let $R$ be a finite ring. Then \[|\PolFun[\Ralfak]|=[\Null[R] \colon \Nulld[R]] |\PolFun[R]|^{k+1}.\]  Moreover, when $R$ is a direct sum of local rings which are not fields, we have
		\[|\PolFun[\Ralfak]|=|\Stabk[R]|\cdot|\PolFun[R]|^{k+1}.\] 
	\end{theorem}
	\begin{proof}
		We have,
		\begin{align*}
		|\PolFun[\Ralfak]| &=[R[x] \colon \Nulld[R]] |\PolFun[R]|^{k} \text{ (By Proposition~\ref{CHSfirstcountfor} and Remark~\ref{nulremark})}\\
		&=[\Null[R] \colon \Nulld[R]]|\PolFun[R]|^{k+1} \text{ (By Lemma~\ref{CHS2ndisoapli})}.
		\end{align*}
		The second part follows from the above and Theorem~\ref{CHS12}~(\ref{CHSfourthstab}).      
	\end{proof}
	We turn now to find explicitly the number of polynomial functions on $\Ralfak[\mathbb{F}_q]$. To do this, we need 
	the following  lemma, and we leave its proof to the reader.
	\begin{lemma}\label{CHSNidFid} 
		Let $\mathbb{F}_q$ be a finite field. Then:
		\begin{enumerate}
			\item $\Null[\mathbb{F}_q]=(x^q-x)\mathbb{F}_q[x]$;
			\item $\Nulld[\mathbb{F}_q]=(x^q-x)^2\mathbb{F}_q[x]$.
		\end{enumerate}
	\end{lemma}
	
	\begin{proposition}\label{CHScontfld}
		Let $\mathbb{F}_q$ be a finite field. Then $|\PolFun[{\Ralfak[\mathbb{F}_q]}]|=q^{(k+2)q}$.
	\end{proposition}
	\begin{proof}
		Set
		\[ \mathcal{A}=\{f\mid  f=f_0+\sum\limits_{i=1}^{k}f_i\alfa_i, \text{ where } f_0,f_i \in \mathbb{F}_q[x],  \deg f_0<2q, \deg f_i<q \text{ for }i=1,\ldots,k\}.\]
		Then it is clear that $|\mathcal{A}|=q^{(k+2)q}$. To complete the proof, we show that if $f,g \in \mathcal{A}$ with $f\ne g$, then $[f]\ne [g]$, or equivalently if $[f]=[g]$, then $f=g$.
		Suppose that $f,g \in \mathcal{A}$, where $f_0+\sum\limits_{i=1}^{k}f_i\alfa_i$ and $g_0+\sum\limits_{i=1}^{k}g_i\alfa_i$,  such that $[f]=[g]$. 
		Thus $[f-g]$ is the zero function on $\Ralfak[\mathbb{F}_q]$. Hence $f-g= (f_0-g_0)+\sum\limits_{i=1}^{k}(f_i-g_i)\alfa_i$ is a null polynomial on $\Ralfak[\mathbb{F}_q]$, whence $f_0-g_0\in \Nulld[\mathbb{F}_q]$ and $f_i-g_i\in \Null[\mathbb{F}_q]$ for $i=1,\ldots,k$ by 
		Theorem~\ref{CHS4}. Then, by Lemma~\ref{CHSNidFid}, we have $(x^q-x)^2\mid (f_0-g_0)$ and  $(x^q-x)\mid (f_i-g_i)$ for $i=1,\ldots,k$. Therefore 
		$f_0-g_0=0$, $f_i-g_i=0$ for $i=1,\ldots,k$ since $\deg (f_0-g_0)<2q$ and $\deg (f_i-g_i)<q$ for $i=1,\ldots,k$. Thus $f=g$. 
	\end{proof}
	The following corollary shows that, when $R=\mathbb{F}_q$, $[\Null[\mathbb{F}_q]\colon \Nulld[\mathbb{F}_q]]\ne |\Stabk[\mathbb{F}_q]|$(see Theorem~\ref{CHS1301} and Theorem~\ref{CHS12}).
	\begin{corollary}
		Let $\mathbb{F}_q$ be a finite field. Then $[\Null[\mathbb{F}_q] \colon \Nulld[\mathbb{F}_q]]=[\N[2q]{\mathbb{F}_q} \colon \Nd[2q]{\mathbb{F}_q}]=q^q$.
	\end{corollary}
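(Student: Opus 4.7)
The plan is to deduce both equalities directly from Lemma~\ref{NidFid}, which identifies $\Null[\mathbb{F}_q]=(x^q-x)\mathbb{F}_q[x]$ and $\Nulld[\mathbb{F}_q]=(x^q-x)^2\mathbb{F}_q[x]$, together with the standard Lagrange-interpolation fact that $|\PolFun[\mathbb{F}_q]|=[\mathbb{F}_q[x]:\Null[\mathbb{F}_q]]=q^q$.

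For the first index, I would observe that multiplication by $(x^q-x)$ is an $\mathbb{F}_q[x]$-linear (in particular additive) isomorphism $\mu\colon\mathbb{F}_q[x]\longrightarrow\Null[\mathbb{F}_q]$, which carries the submodule $\Null[\mathbb{F}_q]=(x^q-x)\mathbb{F}_q[x]$ onto $\Nulld[\mathbb{F}_q]=(x^q-x)^2\mathbb{F}_q[x]$. Passing to quotients gives
\[
\Null[\mathbb{F}_q]\big/\Nulld[\mathbb{F}_q]\;\cong\;\mathbb{F}_q[x]\big/\Null[\mathbb{F}_q]\;\cong\;\PolFun[\mathbb{F}_q],
\]
which has order $q^q$.

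For the second index, I would count the two groups directly. Euclidean division by the monic polynomial $(x^q-x)$ of degree $q$ shows that every element of $\N[2q]{\mathbb{F}_q}$ is uniquely of the form $(x^q-x)p(x)$ with $\deg p<q$, yielding $|\N[2q]{\mathbb{F}_q}|=q^q$. On the other hand, any nonzero element of $\Nulld[\mathbb{F}_q]=(x^q-x)^2\mathbb{F}_q[x]$ has the form $(x^q-x)^2 r(x)$ with $r\ne 0$ and thus degree at least $2q$; hence $\Nd[2q]{\mathbb{F}_q}=\{0\}$, and the index is $q^q$.

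There is no substantial obstacle — once Lemma~\ref{NidFid} is in hand, both equalities reduce to a dimension count. As a consistency check, $[\Null[\mathbb{F}_q]:\Nulld[\mathbb{F}_q]]=q^q$ can also be recovered by combining the first identity in Theorem~\ref{Counttheor} with Proposition~\ref{contfld}: the equation $q^{(k+2)q}=[\Null[\mathbb{F}_q]:\Nulld[\mathbb{F}_q]]\cdot(q^q)^{k+1}$ forces $[\Null[\mathbb{F}_q]:\Nulld[\mathbb{F}_q]]=q^q$.
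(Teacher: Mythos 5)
Your proposal is correct, and for the first equality it takes a genuinely different route from the paper. The paper obtains $[\Null[\mathbb{F}_q]:\Nulld[\mathbb{F}_q]]=q^q$ indirectly: it divides the explicit count $|\PolFun[\Ralfak[\mathbb{F}_q]]|=q^{(k+2)q}$ from Proposition~\ref{contfld} by $|\PolFun[\mathbb{F}_q]|^{k+1}$ using the formula of Theorem~\ref{Counttheor} --- exactly the computation you relegate to a ``consistency check.'' Your primary argument instead uses Lemma~\ref{NidFid} directly: multiplication by $x^q-x$ is an injective endomorphism of $\mathbb{F}_q[x]$ (no zero divisors) with image $\Null[\mathbb{F}_q]$, carrying $\Null[\mathbb{F}_q]$ onto $\Nulld[\mathbb{F}_q]$, so $\Null[\mathbb{F}_q]/\Nulld[\mathbb{F}_q]\cong\mathbb{F}_q[x]/\Null[\mathbb{F}_q]$, whose order is $q^q$ by Lagrange interpolation. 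This is more elementary and self-contained, since it never invokes the machinery of polynomial functions on $\Ralfak[\mathbb{F}_q]$; what the paper's route buys is a cross-check tying the index to the earlier counting results (indeed the corollary is placed in the paper precisely to contrast this index with $|\Stabk[\mathbb{F}_q]|=(q-1)^q$). For the second equality your degree count ($|\N[2q]{\mathbb{F}_q}|=q^q$, $\Nd[2q]{\mathbb{F}_q}=\{0\}$) is the same argument as the paper's.
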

	\begin{proof}
		By Theorem~\ref{CHSCounttheor}, $|\PolFun[{\Ralfak[\mathbb{F}_q]}]|=[\Null[\mathbb{F}_q] \colon \Nulld[\mathbb{F}_q]] |\PolFun[\mathbb{F}_q]|^{k+1}$, whence $[\Null[\mathbb{F}_q] \colon \Nulld[\mathbb{F}_q]]=q^q$ by   Proposition~\ref{CHScontfld}.
		On the other hand, Lemma~\ref{CHSNidFid} gives   $|\N[2q]{\mathbb{F}_q}|=q^q$ and  $|\Nd[2q]{\mathbb{F}_q}|=1$. Thus \[[\N[2q]{\mathbb{F}_q}\colon \Nd[2q]{\mathbb{F}_q}]=\frac{|\N[2q]{\mathbb{F}_q}|}{    |\Nd[2q]{\mathbb{F}_q}|}=q^q.\] 
	\end{proof}
	
	\section{Necessary and sufficient conditions}\label{CHSsec5}
	In this section, an algorithm is provided which decides whether or not a given function on
	$R[\alpha_1,\ldots,\alpha_k]$ is a polynomial function and if this is so it returns its polynomial representation.

	Motivated by ~\cite[Theorem 5]{moti}, we prove the following theorem.
	
	\begin{theorem}\label{CHSsystemlinear} Let  $R$ be a finite commutative ring with $n$ elements, and let $d_1,d_2$ be as in Proposition~\ref{CHSsur}.
		Let $F\colon R[\alpha_1,\ldots,\alpha_k]\longrightarrow R[\alpha_1,\ldots,\alpha_k]$ be a function and, for $0\le i\le k$, $F_i\colon R^{k+1}\longrightarrow R$   the functions such that  $$F(r_{ 0}+\sum\limits_{i=1}^{k}r_{i}\alpha_i)=F_0(r_0,\ldots,r_k)  +\sum\limits_{i=1}^{k}F_i(r_0,\ldots,r_k)\alpha_i,$$   		
		for  all  $(r_0,\ldots,r_{k})\in R^{k+1}$. Then the following statements are equivalent:
		
		\begin{enumerate}
			\item $F$ is a polynomial function on $R[\alpha_1,\ldots,\alpha_k]$;
			\item $F$ can be represented by a polynomial of degree $ \le d_1-1$;
			\item $F$ can be represented by a polynomial
			\[f(x) = f_0(x) +\sum\limits_{i=1}^{k}
			f_i(x)\alpha_i,\]
			where  $f_0(x)=\sum\limits_{l=0}^{d_1-1}a_{0\,l}x^l$, $f_i(x)=\sum\limits_{m=0}^{ d_2-1}a_{i\,m}x^m $ with $a_{0\,l},a_{i\,m}\in R$ for $l=0,\ldots,d_1-1$,\ $i=1,\ldots,k, \, m=0,\ldots,d_2-1$;

			\item $F_0(r_0,\ldots,r_k)$ depends only on $r_0$; and for $1\le i \le k$, $F_i(r_0,\ldots,r_k)$ depends only on $r_0$ and $r_i$.
			
			The system of $|R|+k{|R|^{2}}$ linear equations in $d_1+kd_2$ variables ($y_{0\,l}$
			with $0\le l< d_1$ and  $y_{i\,m}$ with $1\le i\le k$ and $0\le m< d_2$), 
			\begin{align}\label{CHSSys}
			\sum\limits_{l=0}^{d_1-1}y_{0\,l} r_{0}^l& =  b_0(r_0)\nonumber
			\\
			\sum\limits_{l=1}^{d_1-1}(ly_{0\,l}r_{0}^{l-1})r_{i} +\sum\limits_{m=0}^{d_2-1}y_{i\,m} r_{0}^m & = b_i(r_0,r_i) \text{ for
			} i=1,\ldots,k,   
			\end{align}
			where $b_0(r_0)=F_0(r_0,\ldots,r_k)$, $b_i(r_0,r_i)=F_i(r_0,\ldots,r_k)$ and  $r_j$ varies through all elements of $R$  for $j=0,1,\dots,k$,	
			has a solution in $R$.
		\end{enumerate}
	\end{theorem}
	\begin{proof}It is clear that $(3)\Rightarrow(2)\Rightarrow(1)$. The implication  $(1)\Rightarrow(3)$ follows by    Proposition~\ref{CHSsur}. 
		$(3)\Rightarrow(4)$  the first statement follows from Corollary~\ref{CHSneccondit}. Then, suppose that $F$ can be represented by a polynomial $f\in R[\alfa_1,\ldots,\alfa_k][x]$, where  
		$f= f_0+\sum\limits_{i=1}^{k}f_i\alpha_i, \text{ such that } f_0(x)=\sum\limits_{l=0}^{d_1-1}a_{0\,l}x^l, 
		f_i(x)=\sum\limits_{m=0}^{ d_2-1}a_{i\,m}x^m,\text{ where } f_0,f_i\in R[x]$ for $i=1,\ldots,k$. So, for $r_0,\ldots,r_{k}\in R$, we have since $F$ is induced by $f$,
		\begin{align*}
		F(r_0+&\sum\limits_{i=1}^{k}r_i\alfa_i)=f(r_0+\sum\limits_{i=1}^{k}r_i\alfa_i)=f_0(r_0)+ \sum\limits_{i=1}^{k}(r_if_0'(r_0)+f_i(r_0))\alfa_i \text{ (by Lemma}~\ref{CHS02})\\
		&=\sum\limits_{l=0}^{d_1-1}a_{0\,l}r_0^l+\sum\limits_{i=1}^{k}(\sum\limits_{l=1}^{d_1-1}r_i(la_{0\,l}r_0^{l-1})  +\sum\limits_{m=0}^{d_2-1}a_{i\,m}r_0^m)
		\alpha_i 
		\\
		&=F_{0}(r_0,\ldots,r_k)+\sum\limits_{i=1}^{k}F_i(r_0,\ldots,r_k)\alfa_i \text{ (by the definition of }F)
		\\
		&=b_{0}(r_0)+\sum\limits_{i=1}^{k}b_i(r_0,r_i)\alfa_i.
		\end{align*}
		Therefore,
		\begin{align*}
		\sum\limits_{l=0}^{d_1-1}a_{0\,l}r_0^ l& =  b_{0}(r_0)
		\\
		\sum\limits_{l=1}^{d_1-1}(la_{0\,l}r_0^{l-1}) r_i +\sum\limits_{m=0}^{d_2-1}a_{i\,m}r_0^m & =  b_i(r_0,r_i) \text{ for }  i=1,\ldots,k.  
		\end{align*}
		
		Hence,  since each $r_j$ varies through  all the elements  of $R$, the system of linear equations~(\ref{CHSSys}) has a solution 
		$y_{0\,l}=a_{0\,l},\,y_{i\,m}=a_{i\,m}$, 
		for $l=0,\ldots,d_1 -1$;  $m=0,\ldots,d_2 -1$; $i=1,\dots,k$.
		
		Finally, we can prove $(4)\Rightarrow(3)$ by reversing the previous steps. 
	\end{proof}
	\comment{
		\begin{remark}
			Keep the notation of Theorem~\ref{CHSsystemlinear}.
			\begin{enumerate}
				\item For an element $r_0+\sum\limits_{i=1}^{k}r_{i}\alpha_i$, there are  exactly $k+1$ equations in the 
				system of linear equations~(\ref{CHSSys}) corresponding to this element.
				\item In view of the necessary conditions of Corollary~\ref{CHSneccondit}, we expect to get repetitions of equations in the system~(\ref{CHSSys}). For example, let us consider the elements $r_0+\sum\limits_{i=1}^{k}r_{i}\alpha_i$ and $r_0+\sum\limits_{i=1}^{k}c_{i}\alpha_i$ of $\Ralfak$ with $c_i \ne r_i$ for some $i\ge 1$. Then,  for a polynomial function $F$, it is necessary that      $b_{0}(r_0,r_1,\ldots,r_k)= b_{0}(r_0,c_1,\ldots,c_k)$. Because, otherwise,   the system~(\ref{CHSSys}) will contain the following equations 
				\begin{align} 
				\sum\limits_{l=0}^{d_1-1}y_{0\,l}r_{0}^l& =  b_0(r_0,r_1,\ldots,r_k)\nonumber
				\\
				\sum\limits_{l=0}^{d_1-1}y_{0\,l}r_{0}^l& =  b_0(r_0,c_1,\ldots,c_k)\nonumber, 
				\end{align}
				which implies that the system~(\ref{CHSSys}) has no solution.
			\end{enumerate}
	\end{remark}} 
	\noindent {\bf Acknowledgment.} This work was supported by  
	the Austrian Science Fund FWF: P 27816-N26 and P 30934-N35. The author would like to thank  Kwok Chi Chim and Paolo Leontti
	for  valuable suggestions and comments on earlier versions of the manuscript. 
	\bibliographystyle{plain}
	\bibliography{PolyAlgDis}
\end{document}